\numberwithin{equation}{section}
\newcommand{\bp}{\begin{picture}}
\newcommand{\bpn}{\begin{picture}(0,0)}
\newcommand{\ep}{\end{picture}}
\def\vir{\makebox(0,0){\rule{8pt}{8pt}}}
\def\cir{\circle*{0.23}}
\def\ci#1{%
{\bpn\put(0,0){\cir}\put(0,0.2){\makebox(0,0)[b]{$\scriptstyle #1$}}\ep}}
\def\vi#1{%
{\bpn\put(0,0){\vir}\put(0,0.2){\makebox(0,0)[b]{$\scriptstyle #1$}}\ep}}
\def\cirr#1{%
{\bpn\put(0,0){\cir}\put(0.2,0){\makebox(0,0)[l]{$\scriptstyle #1$}}\ep}}
\def\cill#1{%
{\bpn\put(0,0){\cir}\put(-0.2,0){\makebox(0,0)[r]{$\scriptstyle #1$}}\ep}}
\def\lijn{\line(1,0){1}}
\def\Bbb{\mathbb} \def\cal{\mathcal} 
\def\wt#1{\widetilde{#1}} 
\def\wl#1{\overline{#1}} 
\def\sier#1{{\cal O}_{#1}} 
\def\C{{\Bbb C}}    
\def\Q{{\Bbb Q}} \def\Z{{\Bbb Z}}
   \def\cO{{\cal O}}
\def\lra{\longrightarrow}
\def\fm{\mathfrak m}
\def\pabi#1/#2{\frac{\partial #1}{\partial x_{#2}}}
\DeclareMathOperator{\lcm}{lcm} 
\DeclareMathOperator{\weight}{wt} 
\def\ww#1={\weight(w_{#1})=} 
\def\wv#1={\weight(v_{#1})=} 
\def\wu#1={\weight(u_{#1})=} 
\newtheorem{theorem}{Theorem}[section] 
\newtheorem{prop}[theorem]{Proposition} 
\newtheorem{lemma}[theorem]{Lemma} 
\theoremstyle{definition} 
\newtheorem{defn}[theorem]{Definition} 
\newtheorem{remark}[theorem]{Remark} 
\newtheorem{example}[theorem]{Example} 
\begin{document} 
 
\title{Kulikov singularities}  
 
\author{Jan Stevens} 
\address{Department of Mathematical Sciences, Chalmers University of  
Technology and University of Gothenburg.  
SE 412 96 Gothenburg, Sweden} 
\email{stevens@chalmers.se}


\begin{abstract} 
In the study of normal surface singularities 
the relation between analytical and topological properties
and invariants of the singularity is a very rich problem.
This relation is particularly close for surface singularities
constructed from families of curves. We use these
Kulikov singularities to reexamine results of
N\'emethi-Okuma and Tomaru.
\end{abstract}

\maketitle 
 
\textit{\hfill Dedicated to the memory of Egbert Brieskorn.}
 
 \bigskip
 
\section*{Introduction}\label{s1} 
The first time I met Brieskorn was when I started my Ph.D. studies
in Leiden and he was spending some months there. Horst Knörrer
was then also working there.  Through his students Brieskorn
has influenced my career and work very much.  And of course
through his work, in the first place through his book with Knörrer
on plane algebraic curves \cite{BK}. This is a most remarkable
book, not only because of its value for money (Brieskorn negotiated
a price below DM 50) and its white cover, but mainly because its style
and contents.
Ever since curve singularities and algebraic curves have been central in
my work. 

Trying to describe singularities one may ask the question:
\medskip

\centerline{\it
Which discrete data are needed to know a singularity?}

\medskip
\noindent
One interpretation of ‘knowing a singularity’ is that we can
write down equations. As we
only have discrete data, such equations necessarily describe an equisingular family of singularities.

For plane curve singularities  there are very satisfactory answers
to the question, which can be found in Brieskorn's book \cite{BK}.
There is the link of the singularity,
which gives the embedded topology (without the embedding
one has only the number of components); another invariant
is the resolution graph. Since Brieskorn's work on the
exotic spheres as links of singularities it is realised that in high
dimension the abstract link contains not enough information.
In the surface case the situation is different. The topology of the
link, encoded in the resolution graph, is a strong invariant.
For rational and minimally elliptic singularities it determines
the equisingularity class. 
For higher geometric genus this is no longer the case and the
study of the relation between analytical and topological properties
and invariants of singularities is a very rich problem.

To have a strong relation we have to look at special  classes of
singularities. In the work of Neumann and Wahl
(for an overview see \cite{Wahl}) and of N\'emethi
two kind of restrictions are imposed, an analytical one, that the singularity is
$\Bbb Q$-Gorenstein, and a topological one, that the link is a rational homology sphere. 
Neumann and Wahl came even up with a way to write down equations
from the resolution graph, provided certain special numerical conditions are satisfied. The so called splice type equations describe a complete intersection singularity in a particular simple form, however  not for a singularity
with the original graph, but for its universal abelian cover (which is a finite cover due to  the rational homology sphere condition). 
In  a recent paper N\'emethi and Okuma \cite{NO} study 
which analytic structures can occur for a specific resolution graph,
giving details for an example already mentioned by N\'emethi
\cite{Nem}. One of the occurring structures is that of a Kodaira
or Kulikov singularity.

Kodaira singularities were introduced by Karras \cite{Kar0}, using a construction similar to the one earlier described by  Kulikov \cite{Kul}.
In my thesis \cite{JS} I introduced the term \textit{Kulikov singularities}.
The construction starts from a (degenerating)
1-parameter family $\pi \colon W\to D$ of curves of genus $g$. 
Let $\sigma \colon \smash{\wt W} \to W $ be the blow up of $W$ in $r$ points 
of the special fibre $W_0$, each point a smooth point on a component occurring with multiplicity $1$. Then the  strict transform 
of the special fibre   can be blown down to a singular point $p\in \wl W$.
By definition $(\wl W, p)$ is a Kulikov singularity.
 The study of properties of
such singularities reduces in two ways to the study of curves. 
The morphism $\pi$ descends to a function on the singularity, 
which defines a general hyperplane section. This curve singularity is
more accessible and invariants like its multiplicity and embedding dimension
determine the corresponding  invariants of the surface singularity.
The other occurrence of curves is by construction: the properties of the
central fibre, considered as curve of arithmetic genus $g$, are
essential.

Kulikov introduced his construction to give a uniform construction
of the unimodal and bimodal singularities. These are the simplest types
of minimally elliptic singularities. For higher genus  Kulikov singularities
should also be considered as simplest types. The generalisation of 
Laufer's minimally elliptic cycle \cite{Lau} is the characteristic cycle,
introduced by Karras for Kodaira singularities \cite{Kar} and in \cite{JS}
in general. Tomaru studied for which Brieskorn singularities
the characteristic cycle is equal to the fundamental cycle  \cite{Tom} .

Karras' work on Kodaira singularities of higher genus \cite{Kar} and
my work on Kulikov singularities \cite{JS} was never published.  When 
referred to, these singularities are mainly seen as singularities where there is
a function defining the fundamental cycle $Z$, which is moreover reduced at
components $E_i$ with $E_i\cdot Z<0$. In this paper I actually take
this as definition (see Definition \ref{Kdef}), 
being the shortest, but it is the construction using
a family of curves which gives a good understanding of the 
singularity.  As illustration  I treat the results of N\'emethi
and Okuma \cite{NO} and of Tomaru \cite{Tom} from  this point of view.

\section{Invariants of surface singularities}\label{sect_een}
The topological type of a normal complex
surface singularity is determined by and  determines
the resolution graph of the minimal good resolution. 
But a resolution graph 
can be defined for any resolution, not necessarily good.

\begin{defn}
Let $\pi\colon (M,E)\to (V,p)$ be a resolution of a surface 
singularity with exceptional divisor $E=\bigcup_{i=1}^r E_i$.
The \textit{resolution graph}  $\Gamma$ is a weighted graph 
with vertices 
corresponding to the irreducible components $E_i$. 
Each vertex has two weights,
the self-intersection $-b_i=E_i^2$, and the arithmetic genus $p_a(E_i)$, the second
traditionally written in square brackets and omitted if zero.
There is an edge between distinct vertices 
if the corresponding components $E_i$ and $E_j$ intersect,  
weighted with the intersection number
$E_i\cdot E_j$  (only written out if larger than one).
\end{defn}
Other definitions, which record more information, are possible:
one variant is to have an edge 
for 
each intersection point $P\in E_i\cap E_j$, with weight
the local intersection number $(E_i\cdot E_j)_P$. This is the more
common definition in the case that the intersections are transverse.

The classes of the curves $E_i$
form a preferred basis of $H:=H_2(M,\Z)$. 
Following
algebro-geometric tradition the elements of $H$ are called
\textit{cycles}. They are written as linear combinations of the $E_i$.
 The intersection form on $M$ gives  a negative definite
quadratic form on $H$. Let $K\in H^2(M,\Z)$ 
be the canonical
class. It can be written as rational cycle in $H_\Q=H\otimes \Q$ by solving
the adjunction equations $E_i\cdot(E_i+K)=2p_a(E_i)-2$.
The function $-\chi(A)=\frac12 A\cdot(A+K)$, $A\in H$, makes $H$ 
into a quadratic \textit{quadratic lattice},  in the sense of
\cite[1.4]{LW}. 
We prefer to work with the genus
$p_a(A)=1-\chi(A)$. Note that the genus function determines the
intersection form, as
\[
p_a(A+B)=p_a(A)+p_a(B)+A\cdot B -1\;.
\]
The data $(H,p_a)$ is equivalent to $(H, \{E_i\cdot E_j\},
\{p_a(E_i)\})$, encoded in the resolution graph $\Gamma$.

There are some important cycles on $E$, some of which only
depend on the quadratic lattice, while others depend on the
analytic structure. 

\begin{defn}\label{deffc}
The \textit{fundamental cycle} $Z$ is the is the smallest positive cycle 
such that $E_i\cdot Z\leq0$ for all $i$. The \textit{maximal ideal cycle}
$Z_\fm$ is the smallest cycle occurring as compact part of the divisor
of a function $f\in \fm_{(V,p)}$. The \textit{canonical cycle} $Z_K$ is the
rational cycle on $E$, which is numerically equivalent to the 
anticanonical class of the resolution $M$.
\end{defn}

We recall that the \textit{geometric genus} $p_g(V,p)$ is the dimension
of $R^1\pi_*\sier M$. This is equal to the maximal
value of $h^1(\sier D)$ over all positive cycles. In fact, there
is a unique minimal \textit{cohomological cycle} with this maximal
value (see \cite[4.8]{Reid}).  A topological lower bound for $p_g$ is
the arithmetic genus $p_a(V,p)$, which is the maximal
value of $p_a(D)$ over all positive cycles. The genus $p_a(Z)$
of the fundamental cycle is also a topological invariant of the singularity,
which is called the \textit{fundamental genus} $p_f(V,p)$  \cite{Tom}.

Obviously $p_f\leq p_a \leq p_g$, and all inequalities can be
strict; the easiest example with $p_a>p_f$ is the case of an 
irreducible exceptional curve $E$ of genus $g>1$ and self-intersection 
$-1$.  

\begin{defn} The \textit{characteristic cycle} $C$ of a nonrational
singularity is the smallest cycle
which realises the fundamental genus: it is the cycle $C\leq Z$
with  $p_a(C)=p_a(Z)$ and  $p_a(D)<p_a(C)$ for all cycles
$0<D<C$.
\end{defn}
This cycle is a generalisation of Laufer's minimally elliptc cycle
and its existence is proved in the same way.
It was first introduced by Karras for Kodaira singularities
\cite{Kar}. The general definition is in \cite{JS}; Tomaru also
introduced it under the name \textit{minimal cycle} \cite{Tom}.

\section{Kulikov singularities}\label{s2} 
In this section we introduce the Kulikov construction,
give some properties and discuss when the resulting singularity
is Gorenstein.

\begin{defn}\label{Kdef}
Let $(V,p)$ be a normal surface singularity with fundamental cycle
$Z$ on the minimal resolution. It 
is called a \textit{Kulikov singularity} if 
there exists a function $f\colon (V,p) \to (\C,0)$ with $(X,p)=(f^{-1}(0),p)$ 
a reduced curve singularity with divisor 
on the minimal resolution of the form $Z+\wt X$, such that the
strict (or proper) transform $\wt X$ of $X$ intersects 
the exceptional set $E$
transversally in smooth points on components having multiplicity
one in the fundamental cycle $Z$.
\end{defn}

Such singularities are the result of a construction first 
given by Kulikov [Kulikov], to describe the unimodal and bimodal singularities.
He starts from a (degenerating)
family $\pi \colon W\to D$ of curves of genus $g$. This 
is a proper morphism of a non-singular surface to a small disc.
The special fibre $W_0=\pi^{-1}(0)$  over $0\in D$ can be written as
$W_0=n_1C_1+\dots n_kC_k$, where the $C_i$ are the
irreducible components of this  fibre. 
The intersection matrix $(C_i\cdot C_j)$ is negative semi-definite.
Let $\sigma \colon \wt W \to W $ be the blow up of $W$ in $r$ points 
$q_1,\dots,q_r$, each a smooth point of a component $C_i$ which has
multiplicity $n_i=1$ in  $W_0$. We denote the strict transform 
of a component $C_i$ by $E_i$. Then the special fibre  $\wt W_0$ of
$\tilde \pi=\pi \circ \sigma$ can be written as
\[
\wt W_0=n_1E_1+\dots n_kE_k+\wt X_1+\dots +\wt X_r \; ,
\]
where the $\wt X_j$ are $(-1)$-curves.
Now the intersection matrix $(E_i\cdot E_j)$ is negative definite and
$E=\bigcup E_i$ can be blown down to a singular point $p\in \wl W$.

\begin{lemma}
Kulikov's construction results in a Kulikov singularity.
Conversely, every Kulikov singularity can be obtained by this construction. 
\end{lemma}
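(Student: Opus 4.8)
The statement has two directions. For the first (Kulikov's construction yields a Kulikov singularity in the sense of Definition \ref{Kdef}), the plan is to identify $f$ as the function induced by $\tilde\pi$ on $(\wl W,p)$, i.e.\ the descent of $\pi\circ\sigma$ through the blow-down $\wt W\to\wl W$. Since $\tilde\pi$ is constant on $E=\bigcup E_i$ (these lie in the fibre $\wt W_0$), it factors through a holomorphic function $f$ on a neighbourhood of $p$, and $f^{-1}(0)$ has as its compact part exactly $\sum n_iE_i$ and as its non-compact part the curves $\wt X_1,\dots,\wt X_r$, which are the strict transforms of the curves one removes. These $\wt X_j$ are $(-1)$-curves meeting $E$ transversally in one smooth point each, lying on a component $E_i$ with $n_i=1$; that is precisely the transversality condition in the definition. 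What must be checked is that $\sum n_iE_i$ is actually the fundamental cycle $Z$ on the \emph{minimal} resolution and that this resolution is already minimal (no further $(-1)$-curves among the $E_i$). The first point follows because $E_i\cdot\bigl(\sum n_jE_j\bigr)=E_i\cdot(\wt W_0-\sum\wt X_j)=-\,(\text{number of }\wt X_j\text{ on }E_i)\le 0$, so $\sum n_iE_i$ is a cycle with the defining property of $Z$, and minimality of the cycle follows from the fact that it comes from the connected fibre $W_0$ with the $n_i$ forced by negative semidefiniteness — essentially $W_0$ has no proper subcurve with the same numerical property. Some care is needed to argue we may start from a \emph{minimal} resolution: one blows down any $(-1)$-curve $E_i$ with $p_a(E_i)=0$ not meeting any $\wt X_j$, which only simplifies the family, so WLOG $M$ is the minimal resolution and $r\ge 1$.

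For the converse, the plan is to reverse the construction. Given a Kulikov singularity $(V,p)$ with its function $f$, take the minimal resolution $M\to V$; by definition $\operatorname{div}(f\circ\pi)=Z+\wt X$ with $\wt X$ meeting $E$ transversally in $r$ smooth points on components of multiplicity one in $Z$. Blow down the $r$ components of $\wt X$ — each is a $(-1)$-curve, since it is a smooth rational curve (a strict transform of a branch of the reduced curve $X$) meeting the rest of the fibre of $f\circ\pi$ in one point, and the fibre being a principal divisor gives $\wt X_j\cdot\operatorname{div}(f\circ\pi)=0$, hence $\wt X_j^2=-1$. This yields a surface $W$ with a morphism $\pi\colon W\to D$ (the descent of $f\circ\pi$, where $D$ is a small disc around $0$) whose special fibre is $Z$ with the multiplicities inherited from $Z$, and the points $q_1,\dots,q_r$ one must blow up to recover $M$ are exactly the images of the points $\wt X_j\cap E$, which lie on components with multiplicity $1$. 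The remaining fibres of $\pi$ are smooth curves (the components of nearby $f$-levels), so $\pi$ is a degenerating family of curves; the arithmetic genus $g$ equals $p_a(Z)=p_f(V,p)$. One should verify that $W$ is actually non-singular — this follows because blowing down the $\wt X_j$, which are $(-1)$-curves meeting $E$ transversally and meeting each other not at all (they are disjoint, being disjoint strict transforms of distinct local branches), is Castelnuovo contraction.

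The main obstacle I expect is the bookkeeping around which surface one works on: making precise that the cycle $\sum n_iE_i$ arising from the fibre $W_0$ really is the fundamental cycle on the \emph{minimal} resolution, and conversely that contracting the $\wt X_j$ does not create singularities and does produce a surface carrying a genuine family $W\to D$ rather than just a function to a disc without the fibration structure. The key technical ingredient both ways is the identity $\wt X_j\cdot\operatorname{div}(f\circ\pi)=0$ together with $\wt X_j\cdot E=1$, which pins down $\wt X_j^2=-1$ and drives the Castelnuovo contractions; and the observation that a principal divisor on $W$ has connected, negative-semidefinite special fibre, which identifies $\sum n_iE_i=Z$. The genus and transversality statements are then essentially immediate from the construction.
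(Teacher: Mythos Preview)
Your forward direction is essentially the paper's: reduce to the minimal resolution by blowing down superfluous $(-1)$-curves, then show that the compact part $Y=\sum n_iE_i$ of $\wt W_0$ equals the fundamental cycle $Z$. Your inequality $E_i\cdot Y\le 0$ gives $Z\le Y$, but your justification of equality (``connectedness and negative semidefiniteness'') is too vague. The paper's clean argument: write $Y=Z+D$ with $D\ge 0$; since each $\wt X_j$ meets $Y$ in a multiplicity-one component and $Z$ is positive on every component, $D$ misses $\wt X$; then $0=D\cdot\wt W_0=D\cdot Z+D^2+D\cdot\wt X=D\cdot Z+D^2$, with both summands $\le 0$, so $D^2=0$ and hence $D=0$ by negative definiteness.

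The converse has a genuine gap. On the minimal resolution $M$ of the \emph{germ} $(V,p)$, the strict transforms $\wt X_j$ are non-compact smooth discs, not projective curves: they run off to the boundary of any representative. Consequently $\wt X_j^2$ is undefined, the relation $\wt X_j\cdot\operatorname{div}(f)=0$ is not available, and Castelnuovo contraction does not apply. Even if you formally collapse each $\wt X_j$ to a point, the resulting map to $D$ is not proper---the nearby fibres $f^{-1}(t)$ are open Milnor fibres, not compact curves---so you do not obtain a family $\pi\colon W\to D$ of compact curves in the required sense.

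The paper repairs this by \emph{compactifying} rather than contracting, following Karras. Near each $q_j\in E\cap\wt X_j$ one has local coordinates $(x,y)$ with $f=xy$ and $\wt X_j=\{y=0\}$; one glues to $M$ the other chart of the blow-up of $\C^2_{(u,y)}$ at the origin, via $u=xy$. This adds a point at the far end of each $\wt X_j$, turning it into a genuine compact $(-1)$-curve, and the function $f$ extends as $u$, so the resulting map $\wt W\to D$ is proper with smooth compact general fibre. Only now can one blow down the compactified $\wt X_j$ to obtain $W\to D$. This gluing step is the missing ingredient in your plan.
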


\begin{proof}
The construction yields the minimal resolution if there are no $(-1)$-curves
in the family $\pi \colon W\to D$ except possibly curves containing
a point $q_j$.  If there are other $(-1)$-curves we blow them down
without changing the resulting singularity. So we may assume that
$\wt W \to \wl W$ is the minimal resolution of the singularity $p\in
\wl W $. We write
$\wt W_0=Y+\wt X$  and have
to show that $Y$ is the fundamental cycle of the
singularity $(\wl W,p)$. 
We put   $Y=Z+D$ with $D$ an effective cycle
supported on $E$. Then $ D$ does not intersect 
$\wt X$, as each $\wt X_i$ intersects  $Y$ in a component with
multiplicity one. Now $0=D\cdot \wt W_0=D\cdot (Z+D+\wt X)= D\cdot Z+ 
D\cdot D\leq 0$, so $D\cdot D=0$ and therefore $D=0$.

Conversely, given a function $f\colon (V,p)\to (\C,0)$ with
divisor $Z+\wt X$ we compactify to a family of curves, following
Karras [1980,Thm 2.9]: in each point $q\in E\cap \wt X$ there are local
coordinates such that $f$ is given by $xy=0$, and $\wt X$ by $y=0$.
We glue the blow-up of the origin to it: with coordinates $(u,y)$
we have two charts, given by $(u,y)=(u,u\eta)=(xy,y)$. The glueing 
is by identifying the $(x,y)$ coordinates. Then $u=xy$ extends the 
function $f$.
\end{proof}

Kulikov singularities are a special case of Kodaira singularities,
defined by Karras \cite{Kar0,Kar}. 
In his construction it is allowed that points to
be blown up coincide: one blows up consecutively, and it is
allowed to blow up the strict transform of the fibre in a point
of intersection with a previously blown up curve.
Then the curve $(X,p)=(f^{-1}(0),p)$ is not necessarily
a reduced curve.

The advantage of the more strict definition of Kulikov singularities
is that the curve $(X,p)$ is a general hyperplane section. The function 
$f\colon (V,p) \to (\C,0)$ defines a smoothing of this curve with Milnor
number $\mu=2g+r-1$. The structure of the hyperplane section is often  
much easier to describe than that of the singularity itself. It allows
conclusion about the multiplicity and the embedding dimension of the
singularity.

An alternative description of the construction starts from a
\textit{minimal family} $\pi \colon W\to D$, meaning that $W$ does
not contain $(-1)$-curves. One then blows up points consecutively,
each time blowing up a point with multiplicity one in the special fibre.
In each stage a $(-1)$-curve intersects only one
other curve, so in the final surface the $(-1)$-curves are ends, and their
complement is connected. Write as before
$\wt W_0=Y+\wt X$ with $\wt X$ the union of the $(-1)$-curves.
Then the support of $Y$ can be blown down.

We have the following properties.
\begin{prop}\quad\label{propchar}
\begin{enumerate} 
\item For a Kulikov singularity
the maximal ideal cycle $Z_\fm$ is equal
to the fundamental cycle $Z$. 
\item The fundamental genus is equal to
the genus of the curves in the family used in the construction: 
$p_f(V,p)=g$. 
\item A rational singularity is Kulikov if and only if the fundamental cycle is 
reduced.
\item The characteristic cycle of a nonrational Kulikov singularity is the 
strict transform of the special fibre of the minimal family resulting in
the singularity. \label{char}
\end{enumerate} 
\end{prop}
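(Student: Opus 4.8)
The plan is to prove the four items separately, throughout using the notation of the construction: the minimal family $\pi\colon W\to D$ with special fibre $W_0=\sum n_iC_i$, the blow-up $\sigma\colon\wt W\to W$ at the points $q_1,\dots,q_r$ (each a smooth point of a component with $n_i=1$), the exceptional $(-1)$-curves $\wt X_1,\dots,\wt X_r$ of $\sigma$, and the strict transform $Y=\sum n_iE_i$ of $W_0$; by the preceding Lemma I may take $\wt W\to(V,p)$ to be the minimal resolution, so that $Y$ equals the fundamental cycle $Z$. For (1) I would note that the function $f$ of Definition~\ref{Kdef} has compact divisorial part exactly $Z$, so $Z_\fm\le Z$, while for any $g\in\fm_{(V,p)}$ the divisor of $g\circ\pi$ on $\wt W$ is effective and numerically trivial, so its compact part $Z_g$ satisfies $E_i\cdot Z_g=-E_i\cdot(\text{non-compact part})\le 0$ for all $i$, whence $Z_g\ge Z$ and therefore $Z_\fm\ge Z$. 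For (2) the key is the genus identity
\[
p_a\Big(\sum_i m_iE_i\Big)=p_a\Big(\sum_i m_iC_i\Big)-\sum_i r_i\binom{m_i}{2},\qquad r_i=\#\{k:q_k\in C_i\},
\]
which I would derive from $K_{\wt W}=\sigma^*K_W+\sum_k\wt X_k$ and the fact that each $q_k$ is a smooth point of a single component; since $r_i>0$ forces $n_i=1$, putting $m_i=n_i$ gives $p_f=p_a(Z)=p_a(Y)=p_a(W_0)$, which is $g$ because $W_0$ is a fibre of $\pi$.

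For (3) I would argue as follows. If $(V,p)$ is rational, then $0\le p_f\le p_a\le p_g=0$, so $g=0$ by (2); a minimal (that is, $(-1)$-curve-free) degeneration of $\P^1$ has irreducible reduced special fibre — a reducible or non-reduced genus-zero fibre always contains a $(-1)$-curve — and each blow-up at a multiplicity-one point of the fibre again introduces only multiplicity-one components, so $Z=Y$ is reduced. Conversely, if $(V,p)$ is rational with $Z$ reduced, Artin's theory gives that $\fm\cdot\cO_{\wt W}=\cO_{\wt W}(-Z)$ is globally generated; each $E_i$ is a smooth rational curve on which this sheaf has degree $-Z\cdot E_i\ge0$, so by Bertini a general section cuts out a smooth curve $\wt X$ meeting $E$ transversally in smooth points, which by reducedness of $Z$ lie on multiplicity-one components; the corresponding $f$ then exhibits $(V,p)$ as a Kulikov singularity.

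For (4): by the Lemma $Z=Y$ is the strict transform of the special fibre of the minimal family, so it remains to see that $Z$ is already the characteristic cycle, i.e. $p_a(D)<p_a(Z)$ for all $0<D<Z$. Writing $D=\sum m_iE_i$ and $D_0=\sum m_iC_i<W_0$, the identity of (2) gives $p_a(D)\le p_a(D_0)$, and I would then reduce to showing $p_a(D_0)<p_a(W_0)$ for every proper subcycle $0<D_0<W_0$. Since $W_0$ is primitive (it has a multiplicity-one component) and numerically trivial on each component, $D_0$ is not proportional to $W_0$, so Zariski's lemma gives $D_0^2<0$, hence $D_0\cdot(W_0-D_0)=-D_0^2\ge1$; and since $W$ has no $(-1)$-curve, $K_W\cdot C_i\ge0$ for every component (the case of an irreducible $W_0$ being vacuous), so $p_a(W_0-D_0)\ge 1+\tfrac12(W_0-D_0)^2=1+\tfrac12 D_0^2$. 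Hence
\[
p_a(W_0)-p_a(D_0)=p_a(W_0-D_0)-D_0^2-1\ \ge\ (1+\tfrac12 D_0^2)-D_0^2-1\ =\ -\tfrac12 D_0^2\ \ge\ \tfrac12 ,
\]
and being an integer this is $\ge1$, so $p_a(D)\le p_a(D_0)\le g-1<p_a(Z)$.

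The main obstacle will be the structural input needed in (3) and (4): that every Kulikov singularity is produced by the construction from a \emph{minimal} family, which I would isolate as a preliminary lemma, and that the classical facts about fibres used in (4) — Zariski's lemma and the bound $K_W\cdot C_i\ge0$ away from $(-1)$-curves — really do apply to the possibly non-reduced, non-relatively-minimal degenerations occurring here. The genus identity in (2) powers both (2) and (4); it is a routine but delicate computation once the behaviour of the canonical class under the successive blow-ups is pinned down.
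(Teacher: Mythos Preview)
Your treatment of (1)--(3) is fine; the paper explicitly says these need no proof.

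For (4) there is a genuine gap in your setup. You assert, via your ``preliminary lemma'', that every Kulikov singularity arises from a \emph{minimal} family by a single blow-up at $r$ distinct points, and hence that $Z=Y$ coincides with the strict transform of the minimal special fibre. This is false. The Brieskorn--Pham singularities $x^a+y^b+t^c$ with $c\ge 2\lcm(a,b)$, treated in the very next section of the paper, are Kulikov singularities whose characteristic cycle is \emph{strictly} smaller than $Z$: between the strict transform $C^*$ of the minimal fibre and the curves $\wt X_j$ sit chains of $(-2)$-curves coming from iterated blow-ups. So the family $W$ with $\wt W\to W$ a single blow-up and $\wt W$ the minimal resolution need not be minimal (it may contain $(-1)$-curves through the $q_j$), and statement (4) is precisely that $C^*$, not $Z$, is the characteristic cycle.

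The repair is easy and keeps your method intact: work with the minimal model $W^{\min}$ of $W$ and the strict transform $C^*\le Z$ of $W^{\min}_0$. Your genus identity (applied to the blow-ups that actually hit components of $W^{\min}_0$) gives $p_a(C^*)=p_a(W^{\min}_0)=g$ and, for any $0<D<C^*$, $p_a(D)\le p_a(D_0)$ with $D_0<W^{\min}_0$; then your Zariski--lemma computation on $W^{\min}$ (where $K\cdot C_i\ge 0$ holds because there are no $(-1)$-curves) yields $p_a(D_0)<g$. Since the characteristic cycle is the unique minimal cycle with $p_a=g$, it must equal $C^*$.

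For comparison: the paper also reduces to the case $C^*=Z$ (without comment) and then argues by contradiction. Assuming a cycle $C<Z$ with $p_a(C)=p_a(Z)$, it runs a computation sequence from $C$ to $Z$ and reads off that the last curve added is a smooth rational curve which, after blowing down $\wt X$, becomes a $(-1)$-curve in $W$ --- contradicting minimality. Your direct inequality via Zariski's lemma and $K_W\cdot C_i\ge 0$ is a clean alternative to this computation-sequence argument; it gives a uniform bound $p_a(D_0)\le g-1$ rather than locating a specific offending component. Both approaches ultimately rest on the same fact, the absence of $(-1)$-curves in the minimal family.
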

\begin{proof}
Only the last property needs a proof. It suffices to consider the
case that the strict transform is the whole fundamental cycle.
Suppose that $C<Z$ and choose a computation sequence
$Z_j=Z_{j-1}+E_{i_j}$ from $Z_0=C$
to $Z_k=Z$.  As $p_a(Z_j)=p_a(Z)$ for all $j$, each $E_{i_j}$ 
is a smooth rational curve with $E_{i_j}\cdot Z_{j-1}=1$.
This holds in particular for the last one and therefore
$E_{i_k}\cdot Z=1+E_{i_k}^2<0$. This implies that $E_{i_k}$
has multiplicity one in the fundamental cycle and $E_{i_k}\cdot=
-E_{i_k}\cdot \wt X$. After blowing down $\wt X$ the strict transform
of $E_{i_k}$ has self-intersection $(-1)$, contradicting that the 
family we started from was a minimal family.
\end{proof}

To obtain a Gorenstein Kulikov singularity we have to perform the
construction in special points.
Let $\pi \colon W\to D$ be a minimal family of curves of genus $g$. The
relative dualising sheaf $\omega_{W/D}$ is isomorphic to $\Omega_W$. 
Let $(\omega)$
be the divisor of a global section.  It consists of an horizontal, 
non-compact part  $N$ and a divisor supported on the special fibre, determined
up to a multiple of this fibre.
Suppose that each component of $N$ intersects the special fibre
only  transversally in components of multiplicity one. 
Now we perform the Kulikov construction starting from the minimal family, blowing up at least 
these intersection points, in such a way that in the final family 
$\tilde \pi\colon \wt W\to D$ the pull back of $\omega$ has the 
same multiplicity $m$ along all $(-1)$-curves $X_i$, and that
the horizontal part of its divisor intersects the special fibre
only  in $\wt X$. Let $f=\tilde \pi^*(t)$, with $t$  a coordinate
function on $D$. Then the meromorphic two-form $f^{-m}\omega$ 
is holomorphic and nowhere zero on $U\setminus E$, $U$ a 
neighbourhood of $E$. Therefore the Kulikov singularity
is Gorenstein.

\begin{example}
We give an example of a 1-parameter family of weighted homogeneous
Gorenstein singularities $V_a$ such that $V_0$ is not Kulikov
but $V_a$ is Kulikov for $a\neq 0$.
It is the simplest of the series of examples of Brian\c con and Speder
of a family which is $\mu$-constant, but not $\mu^*$-constant \cite{BS}.

Consider 
\[
f_a(x,z,t)=z^3+azx^3+tx^4+t^9\;.
\]
The resolution graph is 
\[
\bp(1,1)(0,-0.5)
\put(0,0){\cir}\put(0,0.2){\makebox(0,0)[b]{$\scriptstyle -2$}}
\put(0,-0.2){\makebox(0,0)[t]{$\scriptstyle [3]$}}
\put(0,0){\lijn}
\put(1,0){\ci{-2}} 
\ep
\]
The  exceptional divisor on the minimal resolution is
$E=E_1+E_2$ with $E_1$ a curve of genus 3 with self-intersection
$-2$, and $E_2$  a rational $(-2)$-curve. 
The canonical model of $E_1$ is the plane quartic $\eta\zeta^3+a\zeta\xi^3+\xi^4+\eta^4$;
 this curve has a
flex in $P=(0:0:1)$, and the tangent $\eta=0$ intersects the curve in
$Q=(-a:0:1)$, so for $a=0$ there is a hyperflex. 
The normal bundle of
$E_1$ has $P+Q$ as divisor, and $E_2$ intersects $E_1$ in $Q$. The
general hyperplane section  has two branches for $a\neq
0$; the strict transform of one branch passes through $P$, and the other
intersects $E_2$ in a smooth point of $E$. For $a=0$ the curve is
irreducible, its strict transform passes through $P=Q=E_1\cap E_2$.

\[
\includegraphics[width=.8\textwidth]{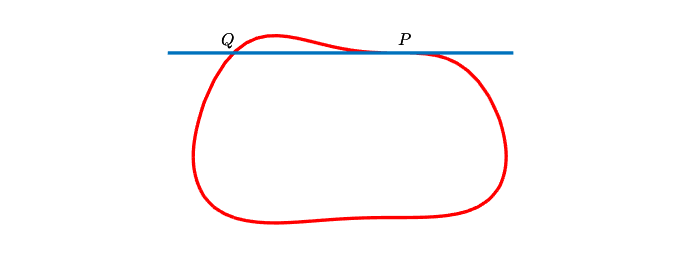} 
\]

To construct this singularity we start from the
trivial family $W=E_1\times D$. A canonical divisor 
is $3P\times D+ Q\times D$. After blowing up in $P\times \{0\}$ the multiplicity along the newly introduced exceptional divisor is
$4$.  Blowing  up in $Q\times \{0\}$ gives multiplicity $2$.
We blow up again in intersection point of special fibre and strict 
transform of section $Q \times D$, resulting in multiplicity $4$. 
By dividing with $t^4$ we see that the singularity is Gorenstein
with  $K=-4E_1-2E_2$. 
The functions  $t$, $x=t^2\xi/\eta$ and $z=t^3\zeta/\eta$ are
holomorphic on neighbourhood of $E$, giving $(t^3\zeta/\eta)^3+a(t^3\zeta/\eta)(t^2\xi/\eta)^3+t(t^2\xi/\eta)^4+t^9=0$; this formula works also 
for $a=0$. The blowing up can be done in family over a base $D\times A$,
with $a$ a coordinate on $A$. We first blow up in 
$P\times {0}\times A$, then in $Q$ as lying
on the strict transform of $C\times {0}\times A$ and then once again
in the intersection point with the strict transform of the appropriate section.
For $a=0$ this means that we blow up in a double point 
of the special fibre, which is not allowed in the Kulikov construction.
\end{example}

\section{The characteristic cycle of Brieskorn-Pham singularities} 

The simplest type of quasi-homogeneous hypersurface 
singularities has an equation, which is a sum of perfect powers,
and is usually called a Brieskorn-Pham polynomial.
We write in the surface case
\begin{equation}\label{pham}
x^a+y^b+t^c
\end{equation}
 with $a\leq b \leq c$.
It is well known how to get the resolution of this
surface singularity from the exponents $a$, $b$ and $c$ \cite{OW}.
The precise form is not important for us now.

\begin{lemma}
If $c\geq \lcm(a,b)$,
the Brieskorn-Pham singularity \eqref{pham} is a 
Kulikov singularity  of genus
$g=(\mu-r+1)/2$, where  $\mu=(a-1)(b-1)$ is the Milnor number 
of the curve singularity $x^a+y^b$ and $r=\gcd(a,b)$ is the
number of branches.
\end{lemma}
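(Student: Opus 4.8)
The strategy is to exhibit the Brieskorn–Pham singularity $x^a+y^b+t^c$ explicitly as the output of Kulikov's construction, using the function $f=t$. The plan is to view $f=t$ as a map $(V,p)\to(\C,0)$ and identify its zero fibre with the curve singularity $x^a+y^b=0$ in the plane $t=0$. Since $c\geq\lcm(a,b)$, this monomial $t^c$ is ``high enough'' that it does not interfere with the equisingularity type of the generic nearby fibre; more precisely, $f=t$ is a smoothing of the plane curve germ $x^a+y^b$, whose Milnor number is $\mu=(a-1)(b-1)$ and whose number of branches is $r=\gcd(a,b)$. So the first step is to check that $X=f^{-1}(0)=\{x^a+y^b=0\}$ is a reduced curve singularity with exactly $r$ branches, and that $t$ restricted to $V$ is the equation of a general hyperplane section — here the hypothesis $c\geq\lcm(a,b)$ is what guarantees genericity of the slice.

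Next I would analyse the total transform of $X$ on the minimal resolution $M\to(V,p)$. The key claim is that the divisor of $f=t$ on $M$ has the form $Z+\widetilde X$, where $Z$ is the fundamental cycle, and the strict transform $\widetilde X$ meets $E$ transversally in $r$ smooth points, each lying on a component $E_i$ with multiplicity one in $Z$, exactly as required by Definition \ref{Kdef}. This is the heart of the matter: I would use the explicit resolution of $x^a+y^b+t^c$ obtained from the continued-fraction / toric data of the triple $(a,b,c)$ (as in \cite{OW}) together with the standard embedded resolution of the plane curve $x^a+y^b$, and match them up. The point is that the $r$ ends of the resolution diagram of the curve singularity correspond, on the surface resolution, to the $r$ components where $\widetilde X$ attaches, and these are precisely the arms along which $t$ vanishes to order one in $Z$. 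The condition $c\geq\lcm(a,b)$ ensures that the ``$t$-direction'' of the resolution is long enough that $\operatorname{div}(t)$ has no excess multiplicity forcing $\widetilde X$ to pass through a singular point of $E$ or through a multiplicity-$\geq 2$ component.

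Once $(V,p)$ is recognised as a Kulikov singularity, the genus formula is immediate from Proposition \ref{propchar}(2): the fundamental genus $p_f(V,p)$ equals the genus $g$ of the curves in the degenerating family used in the construction. On the other hand, the family $\widetilde\pi\colon\widetilde W\to D$ is a smoothing of the curve germ $(X,p)$, so its generic fibre is (an open piece of) the Milnor fibre of $x^a+y^b$, whose first Betti number is $\mu = 2g + (r-1)$; solving gives $g=(\mu-r+1)/2$, which is the asserted value. I expect the main obstacle to be the second step — verifying cleanly, and without a long case analysis in the continued fractions, that $\operatorname{div}_M(t)=Z+\widetilde X$ with the transversality and multiplicity-one conditions holding; the inequality $c\geq\lcm(a,b)$ must be used precisely here, and pinning down exactly why it suffices (rather than some larger bound) is the delicate point.
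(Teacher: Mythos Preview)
Your plan is sound in outline but takes a different route from the paper, and the step you flag as the obstacle is precisely the one the paper avoids.

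The paper proceeds \emph{forward} via the construction rather than \emph{backward} via verification. It starts from the affine family $\xi^a+\eta^b+t^{c-d}=0$ with $d=\lcm(a,b)$, compactifies in the weighted projective space with weights $(d/a,d/b,1)$ to get $\xi^a+\eta^b+t^{c-d}w^d=0$, and checks in the chart $\xi=1$ that the special fibre meets the line at infinity in exactly $r=\gcd(a,b)$ smooth points of multiplicity one. Blowing up those $r$ points gives a Kulikov singularity by the general Lemma of Section~\ref{s2}. Finally the functions $x=\xi t^{d/a}$, $y=\eta t^{d/b}$, $t$ are shown to be holomorphic near the exceptional set and to satisfy $x^a+y^b+t^c=0$, identifying the output with the Brieskorn--Pham singularity. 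The hypothesis $c\geq d$ enters only as $c-d\geq 0$, so that $t^{c-d}$ is a genuine regular function on the base.

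Your approach instead fixes the singularity and tries to read off $\operatorname{div}_M(t)=Z+\widetilde X$ from the Orlik--Wagreich resolution. That works in principle, but it commits you to the continued-fraction bookkeeping you are worried about, and the precise role of $c\geq\lcm(a,b)$ becomes a numerical check on the multiplicities along the arms. The paper's construction buys you this for free: once the family is built, the identity $\operatorname{div}(t)=Z+\widetilde X$ and the transversality at multiplicity-one components are consequences of the general Lemma, not something to be verified case by case. Your derivation of $g=(\mu-r+1)/2$ from the Milnor fibre topology is correct and agrees with the paper.
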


\begin{proof}
We construct the singularity with the Kulikov construction. We start
with an affine family of curves, whose equation is in fact given by 
a Brieskorn-Pham polynomial, but with lower exponent $c$.
Put $d= \lcm(a,b)$. Let $r=\gcd(a,b)$, then $d=\frac{ab}r$.
Consider the  family $\xi^a+\eta^b+t^{c-d}=0$ as family of affine plane
curves, parametrised by $t$,
 and complete it in the weighted projective space with weights 
$(\frac da,  \frac db,1)$. The homogeneous
equation is then $\xi^a+\eta^b+t^{c-d}w^d=0$. We resolve the 
singularity at the origin. We look at the chart $\xi=1$. There
the equation is $1+\bar\eta^b+t^{c-d} \bar w^d=0$, modulo the action
$\frac ad(\frac db,1)$. For $t=0$ we have $1+\bar\eta^b=0$,  so there
are indeed $\frac b{a/d}=r$ points  on the compactification of 
the special fibre. The coordinate transformation from $(\xi,\eta,1)$
coordinates to $(1,\bar \eta,\bar w)$ is 
$\xi=\bar w^{-\frac da}$, $\eta=\bar\eta\bar w^{-\frac db}$.
We blow up in the $r$ points at infinity on the special fibre.
The functions $x:=\xi t^{\frac da}$, $y:=\eta t^{\frac da}$ and $t$ are holomorphic
in a neighbourhood of the strict transform of the special fibre, and
generate the local ring of the Kulikov singularity. They satisfy
$x^a+y^b+t^c=0$.
\end{proof}

It follows that the family of curves obtained by resolving the 
singularity of $\xi^a+\eta^b+t^{c-d}w^d=0$
is not minimal if $c-d\geq d=\lcm(a,b)$.
Furthermore the resolution graph of $x^a+y^b+t^{c-d}$ 
is a subgraph of the resolution graph of $x^a+y^b+t^{c}$. 

\begin{prop}
Write $c=c_0+c_1d$ with $0\leq c_0<d$. The characteristic cycle
of the Brieskorn-Pham singularity \eqref{pham} has support
on the subgraph corresponding to the singularity
$x^a+y^b+t^{c_0+d}$ and is the fundamental cycle of that
singularity. In particular, the characteristic cycle is equal to the
fundamental cycle if and only if $d\leq c <2d$.
\end{prop}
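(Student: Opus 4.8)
The plan is to apply Proposition~\ref{propchar}\eqref{char}: the characteristic cycle of a nonrational Kulikov singularity is the strict transform, in the minimal resolution, of the special fibre of the minimal family producing it. (We may assume $g\geq1$; if $g=0$ the singularity \eqref{pham} is rational and has no characteristic cycle.) So the first task is to identify that minimal family. By the previous Lemma, \eqref{pham} is built from the resolution $W_{c-d}$ of $\xi^a+\eta^b+t^{c-d}w^d=0$ by blowing up $r$ points at infinity; by the remark following that Lemma this family is non-minimal once $c-d\geq d$, and contracting the fibral $(-1)$-curves that then appear carries $W_m$ to $W_{m-d}$, while for $c-d<d$ there is no such curve. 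Iterating, the minimal family producing \eqref{pham} is $W_{c_0}$, the resolution of $\xi^a+\eta^b+t^{c_0}w^d=0$. But $W_{c_0}$ is, by the same Lemma applied with $c_0+d$ in place of $c$ (note $c_0+d\geq d$), precisely the minimal family producing the Kulikov singularity $x^a+y^b+t^{c_0+d}$; since $W_{c_0}$ is minimal, the proof of the Lemma of Section~\ref{s2} identifies the strict transform of its special fibre with the fundamental cycle $Z'$ of $x^a+y^b+t^{c_0+d}$, and Proposition~\ref{propchar}\eqref{char} then says that this $Z'$ is also the characteristic cycle of $x^a+y^b+t^{c_0+d}$. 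In particular, on the resolution graph $\Gamma'$ of $x^a+y^b+t^{c_0+d}$ we have $p_a(Z')=g$ and $p_a(D)<g$ for every cycle $0<D<Z'$.

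Now transport this to the resolution graph $\Gamma$ of \eqref{pham}, into which $\Gamma'$ embeds as a subgraph. For any cycle $D=\sum d_iE_i$ supported on $\Gamma'$ the number $D\cdot K=\sum_i d_i\bigl(2p_a(E_i)-2-E_i^2\bigr)$, hence also $p_a(D)=1+\tfrac12\,D\cdot D+\tfrac12\,D\cdot K$, depends only on the self-intersections, genera and edges inside $\Gamma'$; so $p_a$ computed in $\Gamma$ agrees with $p_a$ computed in $\Gamma'$ on such cycles. Moreover $Z'\leq Z$: the restriction $Z|_{\Gamma'}$ of the fundamental cycle $Z$ of \eqref{pham} satisfies $E_i\cdot(Z|_{\Gamma'})\leq E_i\cdot Z\leq0$ for every $i\in\Gamma'$, so $Z|_{\Gamma'}\geq Z'$ by minimality of $Z'$ among positive cycles on $\Gamma'$ with that property. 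Since $p_a(Z)=g=p_a(Z')$ (by Proposition~\ref{propchar}(2) and the definition of the fundamental genus), and $p_a(D)<g=p_a(Z')$ for all $0<D<Z'$ by the previous paragraph, the cycle $Z'$ satisfies every condition in the definition of the characteristic cycle of \eqref{pham}; by the uniqueness of that cycle, $C=Z'$. This proves the first assertion: $C$ is supported on the subgraph corresponding to $x^a+y^b+t^{c_0+d}$ and equals its fundamental cycle.

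For the last statement, note that $Z'$ and $Z$ have full support on the connected graphs $\Gamma'$ and $\Gamma$. Hence $C=Z$ if and only if $Z'=Z$, if and only if $\Gamma'=\Gamma$; and since $\Gamma'$ is a proper subgraph of $\Gamma$ whenever $c_0+d<c$, this happens exactly when $c_0+d=c$, that is $c_1=1$, that is $d\leq c<2d$.

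The step I expect to be the real obstacle is the identification of the minimal family: one must show that the $(-1)$-curves responsible for the non-minimality of $W_{c-d}$ are precisely those whose successive contraction takes $W_m$ to $W_{m-d}$, and that $W_{c_0}$ with $0\leq c_0<d$ contains no fibral $(-1)$-curve at all. This is where the explicit Orlik--Wagreich description of the resolution of \eqref{pham}, together with the way its $t$-arm lengthens periodically (with period $d$) as $c$ grows, really enters; once the minimal family is pinned down, the rest is formal manipulation of the genus function on the graph.
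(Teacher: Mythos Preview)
Your approach is essentially the paper's: identify the minimal family as $W_{c_0}$ by iterating the contraction $W_m\to W_{m-d}$, then invoke Proposition~\ref{propchar}\eqref{char}. The paper's proof is just these two sentences.

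Your second paragraph, however, is redundant. Once you know the minimal family producing \eqref{pham} is $W_{c_0}$, Proposition~\ref{propchar}\eqref{char} already tells you that the characteristic cycle of \eqref{pham} itself---not only of $x^a+y^b+t^{c_0+d}$---is the strict transform of the special fibre of $W_{c_0}$ in the minimal resolution of \eqref{pham}. That strict transform is $Z'$: the additional $c_1-1$ rounds of blow-ups are centred on the successive $(-1)$-curves and never touch the components coming from $W_{c_0}$. So there is no need to re-verify the defining inequalities of the characteristic cycle on $\Gamma$, nor to check $Z'\leq Z$ separately. Your argument there is correct (and the observation that $p_a$ agrees on $\Gamma'$-supported cycles because self-intersections are unchanged under the further blow-ups is the right reason), but it duplicates what Proposition~\ref{propchar}\eqref{char} already packaged.

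Your closing remark about the ``real obstacle'' is well placed: the assertion that $W_{c_0}$ has no fibral $(-1)$-curve when $0\leq c_0<d$ is exactly the point the paper states without elaboration (``the family is minimal if and only if $c-d<d$''), and it is indeed where the arithmetic of $a,b,c$ enters.
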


\begin{proof}
If the family used in the construction above is not minimal,
one can blow down each component of the strict transform
of the affine curve $\xi^a+\eta^b=0$ and still have a family
of the same type. So the family is minimal if and only
$c-d<d$.
The result now follows from Proposition \ref{propchar}.(\ref{char}).
\end{proof}

The Proposition was proved by Tomaru \cite{Tom} using an  explicit description of the resolution of the singularity.
As to this resolution, we note that there
are $r$ chains of $c_1-1$
$(-2)$-curves from the characteristic cycle to
the components of $\wt X$.

\begin{remark}
The above result extends with the same proof to the 
case of Brieskorn complete intersections. A proof 
in the style of \cite{Tom} was given by Meng, Yuan and Wang
 \cite{MYW}.
\end{remark}

\section{Singularities with a specific resolution graph}  
A  recent paper N\'emethi and Okuma \cite{NO} concerns the problem of determining upper and
lower bounds for the geometric genus in terms of the 
resolution graph.
The Authors  study 
which analytic structures can occur for a specific resolution graph,
giving details for an example already mentioned by N\'emethi
\cite{Nem}. Here we rederive their results from our point of view.

The main  feature of the example is that the topological upper bound 
for $p_g$ is not realised. The maximal $p_g$ occurs for a non Gorenstein
Kulikov singularity and for a Gorenstein splice type singularity.

The singularity considered has an integral homology sphere
link. The resolution graph for the minimal good resolution
is:

\[
\bp(4,1.4)(0,-1.2)
\put(0,0){\ci{-3}}  \put(0,0){\lijn}
\put(1,0){\ci{-1}} \put(1,0){\lijn}
\put(1,0){\line(0,-1){1}} \put(1,-1){\cill{-2}}
\put(2,0){\vi{-13}}  \put(2,0){\lijn}
\put(3,0){\ci{-1}}  \put(3,0){\lijn}
\put(3,0){\line(0,-1){1}} \put(3,-1){\cirr{-2}}
\put(4,0){\ci{-3}}
\ep
\]
This graph satisfies the semigroup condition of Neumann and Wahl
\cite{NW} so there exist singularities of splice type with this graph,
with $p_g=3$.
The defining
equations of this complete intersection singularity have \lq leading\rq\
forms 
\begin{equation}\label{splice}
z_1^2z_2 + z_3^2+ z_4^3,\qquad z_1^3+ z_2^2+ z_4^2z_3\;.
\end{equation}

On the minimal resolution the exceptional
curve is an irreducible two-cuspidal rational curve, of
 self-intersection $-1$.
Therefore the resolution graph for the minimal resolution is simply:
\begin{equation}\label{genustwo}
\bp(0,1)
\put(0,0.5){\cir}\put(0,0.7){\makebox(0,0)[b]{$\scriptstyle -1$}}
\put(0,0.3){\makebox(0,0)[t]{$\scriptstyle [2]$}}
\ep
\end{equation}
with a possibly singular
central curve.
This is the same graph as when the exceptional divisor 
is a smooth  curve of genus two. We note that there exists
a Gorenstein Kulikov singularity with this graph, namely the
hypersurface $z^2=y^5+x^{10}$; it has  the maximal geometric
genus: $p_g=4$.

We first analyse the Gorenstein condition. On the 
minimal resolution  $M$
adjunction gives for the exceptional curve
that  $\omega_E=
\omega_{M}\otimes \sier E(E)$. 
The singularity is Gorenstein
if and only if $\omega_{M}=\sier{M}(-3E)$. This happens 
if and only if $\omega_E=\sier E(-2E)$,  that is, if the conormal
bundle of $E$ is a theta characteristic.

\begin{lemma}
A singularity with resolution graph \eqref{genustwo}
satisfies   $2\leq p_g\leq 4$.  If $p_g=4$   then it is a Gorenstein
Kulikov singularity. If $p_g=3$ it is either non Gorenstein
Kulikov of multiplicity 3 or a non Kulikov complete intersection.
\end{lemma}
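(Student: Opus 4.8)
The plan is to pin down $p_g$ by a cohomological computation on the infinitesimal neighbourhoods of $E$, and then to read off the coarse analytic classification, using the Kulikov construction for the affirmative statements and the list of analytic structures on \eqref{genustwo} obtained in \cite{NO} for the complete intersection alternative.

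\textbf{The bounds.} Since $E$ is irreducible with $E^2=-1$, every effective cycle is a multiple $nE$, the fundamental cycle is $Z=E$, and $p_f=p_a(E)=2$, so $p_g\ge p_a\ge p_f=2$. For the upper bound run the filtration sequences
\[
0\lra \sier E(-(n-1)E)\lra \sier{nE}\lra \sier{(n-1)E}\lra 0 .
\]
Because $\deg\sier E(-kE)=k$ and $E$ has genus $2$, the bundle $\sier E(-kE)$ has $h^1=0$ for $k\ge3$, so $h^1(\sier{nE})$ is constant for $n\ge3$ and $p_g=h^1(\sier{3E})$; moreover $h^1(\sier E(-kE))\le1$ for $k=1,2$ (on a genus-$2$ curve a degree-$1$ bundle has $h^0=h^1\le1$, with equality iff it is effective, and a degree-$2$ bundle has $h^1\le1$, with equality iff it is $\omega_E$). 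As the connecting maps kill the constants, the three sequences for $n=1,2,3$ give the exact identity
\[
p_g=h^1(\sier{3E})=2+h^1(\sier E(-E))+h^1(\sier E(-2E))-\rho ,
\]
where $0\le\rho\le\min(h^1(\sier E(-E)),h^1(\sier E(-2E)))$ is the rank of the connecting map $H^0(\sier{2E})\to H^1(\sier E(-2E))$. In particular $p_g\le2+1+1=4$.

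\textbf{The case $p_g=4$.} All inequalities are now equalities, so $h^1(\sier E(-2E))=1$, i.e.\ $\sier E(-2E)\cong\omega_E$; by the criterion recalled before the Lemma the conormal bundle of $E$ is then a theta characteristic and the singularity is Gorenstein. Also $h^1(\sier E(-E))=1$, so $\sier E(-E)\cong\sier E(P)$ is effective; combined with $\sier E(-2E)\cong\omega_E$ this gives $2P\sim K_E$, i.e.\ $P$ a Weierstrass point. The Kulikov construction applied to $E\times D$ blown up at $(P,0)$ produces a Gorenstein Kulikov singularity with precisely this resolution, conormal bundle and point; by the Brieskorn--Pham Lemma it is the hypersurface $z^2=y^5+x^{10}$, which is Kulikov of genus $2$ and for which a direct count gives $p_g=4$. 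It remains to see that the structure with $p_g=4$ must be this one; I would get this from \cite{NO} (equivalently: one must prove $Z_\fm=E$, after which the order-one function defining the maximal ideal restricts on $E$ to a section of $\sier E(-E)$ with a single simple zero $P$, hence has a smooth transverse strict transform and Definition \ref{Kdef} is satisfied).

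\textbf{The case $p_g=3$.} First observe that a \emph{Kulikov} singularity with graph \eqref{genustwo} has $\rho=0$: by Proposition~\ref{propchar}(1) its maximal ideal cycle equals $Z=E$, so there is an honest function $f$ of order one along $E$; its restriction is a section of $\sier E(-E)$, so $h^0(\sier E(-E))=1$ and $H^0(\sier{2E})$ is $2$-dimensional, spanned by the constants and the class of $f$, both of which lift to $H^0(\sier{3E})$, whence the connecting map vanishes. Thus a Kulikov singularity with this graph has $p_g=3+h^1(\sier E(-2E))$, which equals $3$ exactly when it is non-Gorenstein. Consequently, if our $p_g=3$ singularity is Gorenstein it cannot be Kulikov, and by \cite{NO} it is then the complete intersection of splice type with leading forms \eqref{splice}. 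If instead it is non-Gorenstein, then $h^1(\sier E(-2E))=0$, which by the displayed identity and $\rho\le h^1(\sier E(-2E))$ forces $\rho=0$ and $h^1(\sier E(-E))=1$, so $\sier E(-E)\cong\sier E(P)$; by \cite{NO} (the non-Gorenstein structures on this graph being exactly the Kulikov ones) it is Kulikov, and by the Kulikov construction its general hyperplane section is a unibranch, non-Gorenstein curve singularity with $\delta=g+r-1=2$ (Milnor number $2g+r-1=4$, one branch), hence of semigroup $\langle3,4,5\rangle$ and multiplicity $3$; since a general hyperplane section and the surface have the same multiplicity, the singularity has multiplicity $3$.

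\textbf{The main obstacle.} The cohomological bounds and the identity are routine; the genuine difficulty is the converse direction, distinguishing \emph{Kulikov} from \emph{non-Kulikov} inside a fixed value of $p_g$. Being Kulikov asks that a section of $\sier E(-E)$ lift to an actual function (equivalently $Z_\fm=E$ with transverse strict transform), which is not determined by any finite infinitesimal neighbourhood of $E$ and hence escapes the cohomological count. This is exactly where one imports the full description of analytic structures on \eqref{genustwo} from \cite{NO}; a self-contained proof would have to reprove that description, for instance by analysing the deformations of the Kulikov structures.
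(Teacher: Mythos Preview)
Your computation of the bounds via the filtration $0\to\sier E(-(n-1)E)\to\sier{nE}\to\sier{(n-1)E}\to0$ is exactly the paper's argument, only organised with the connecting-map rank $\rho$ made explicit. Your observation that a Kulikov structure forces $\rho=0$, because the order-one function $f$ lifts the nontrivial element of $H^0(\sier{2E})$ to $H^0(\sier{3E})$, is a clean step the paper leaves implicit; it gives at once that Gorenstein Kulikov implies $p_g=4$, hence that a Gorenstein singularity with $p_g=3$ is not Kulikov.

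The real divergence is in the converse directions. You invoke \cite{NO} to pin down the analytic structure in each case, but the stated purpose of this section is to \emph{rederive} the results of \cite{NO} from the Kulikov viewpoint, so citing \cite{NO} is circular here. The paper instead proceeds by direct construction: for $p_g=4$ it blows up a Weierstrass point of a trivial family; for the Gorenstein $p_g=3$ case it computes the graded ring $\bigoplus H^0(E,\sier E(k(D-K_E)))$ of an even theta characteristic and writes down the complete-intersection equations $w^2=Q(z,\bar z)$, $v^2=P(z,\bar z)$ (degenerating to the splice form~\eqref{splice} for the two-cuspidal curve); for non-Gorenstein Kulikov it blows up a non-Weierstrass point. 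Your ``main obstacle'' paragraph is apt --- the paper does not close this gap abstractly either, but it exhibits the structures rather than importing a classification.

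Two small points. The Brieskorn--Pham lemma only says that $z^2=y^5+x^{10}$ \emph{is} Kulikov of the right genus, not that every Kulikov structure with the given conormal data is this hypersurface. And in the multiplicity-$3$ step you assert that the hyperplane section is non-Gorenstein without justification; the missing observation (implicit in the paper too) is that an irreducible Gorenstein curve with $\delta=2$ is planar, which would force the surface to be a hypersurface and hence Gorenstein, a contradiction.
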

\begin{proof}
To  analyse the possible values for $p_g$ we  look at a
computation sequence. Here one compares 
the different $\cO(-kE)$ via the short exact sequences
\[
0\lra \cO(-(k+1)E) \lra \cO(-kE)\lra \sier E(-kE) \lra 0
\]
As $H^1(\wt X, \cO(-3E))=0$
one gets the exact sequences
\[
0 \lra H^1(\wt X, \cO(-E)) \lra H^1(\wt X, \cO)
\lra H^1(E, \sier E) \lra 0
\]
\begin{multline*}
\qquad H^0(E, \sier E(-E)) \lra H^1(\wt X, \cO(-2E)) \lra \\
\lra 
H^1(\wt X, \cO(-E))
\lra H^1(E, \sier E(-E)) \lra 0 \qquad
\end{multline*}
and the isomorphism $H^1(\wt X, \cO(-2E))\cong H^1(E, \sier E(-2E))$.

This gives   $2\leq p_g\leq 4$. If $p_g=4$ 
then $\sier E(-2E)=\omega_E$,
so the singularity is Gorenstein. Moreover, the theta characteristic
is odd. Indeed, on a smooth genus two curve the divisor of a 
Weierstrass point is an odd theta characteristic. The Kulikov
construction starting from a trivial family and blowing just 
one Weierstrass point lying on the central fibre, yields the 
example $z^2=y^5+x^{10}$.

A two-cuspidal rational curve has only one theta characteristic,
which is even \cite{Har}. 
This can also be seen from the description of the pencil
with this special fibre in the list of Namikawa and Ueno \cite{NU}:
their example is $y^2=(x^3+t)((x-1)^3+t)$, and one sees
that three Weierstrass points come together in cusp. 
This shows that there cannot be 
a singularity with this exceptional divisor with $p_g=4$.
But any computation with the quadratic lattice $H$ cannot 
distinguish between such a curve and a smooth curve.

A non Gorenstein Kulikov singularity is obtained by blowing up
one smooth point of the special fibre; for a smooth curve
this point should not be a Weierstrass point.
By construction the general hyperplane section is a curve with Milnor fibre
of genus two, so $\delta=2$. The only irreducible
non Gorenstein curve singularity is the monomial curve
$(t^3,t^4,t^5)$.  Therefore the surface singularity has 
multiplicity $3$ and embedding dimension $4$.
In this case  $H^0(E, \sier E(-E))=\C$, so $p_g=3$.

If the singularity is Gorenstein, but not Kulikov, 
then $p_g=3$ and the curve $E$ has an even theta characteristic.
For a smooth $E$ there exists a quasi-homogeneous
singularity.
Let $y^2=f_6(x,\bar x)$ be a hyperelliptic curve $E$, and write
$f_6=PQ$ with $P$, $Q$ of degree 3. Consider the divisor $(P)=2D$,
with $D$ a divisor of degree 3 on $E$, consisting of three
Weierstrass points. Then $\sier E(D-K_E)$ is an even theta characteristic. 
The graded ring $\bigoplus H^0(E,\sier E(k(D-K_E)))$ is generated by
$z=xP$, $\bar z=\bar x P$, $w=yP$ and $v=P^2$. The equations
are then
\[
w^2=Q(z,\bar z), \qquad v^2= P(z,\bar z)\;.
\]
The singularity with  two-cuspidal curve as exceptional curve
is a superisolated complete intersection singularity. The graded
tangent cone is obtained in the same way as above, by taking 
$P=x^3$, $Q=\bar x^3$. We have to add terms of lowest degree to
make the singularity isolated, resulting in splice
diagram equations of the form \eqref{splice}:
\[
w^2=\bar z^3+vz^2, \qquad v^2= z^3+w\bar z^2\;.
\]
\end{proof}

Finally a quasi-homogeneous singularity with $p_g=2$
is obtained from a divisor $D-K_E$ with$D$ a general effective divisor
of degree 3 on a smooth curve $E$. 
The graded ring $\bigoplus H^0(E,\sier E(k(D-K_E)))$
has 7 generators. The same ring for the two-cuspidal rational
curve gives a weighted tangent cone of a singularity in $\C^7$.

\end{document}